\documentclass[11pt]{amsart}

\usepackage[T1]{fontenc}
\usepackage{lmodern}
\usepackage{amsmath,amssymb,mathtools}
\usepackage{microtype}
\usepackage[hidelinks]{hyperref}
\hypersetup{
  pdftitle={sharp spectral constanta for scaled q-numerical ranges},
  pdfauthor={Mohamed Amine Aouichaoui and Ryan O'Loughlin},
  pdfsubject={A sharp transfer principle for scaled q-numerical ranges},
  pdfkeywords={q-numerical range, spectral set, Crouzeix conjecture, similarity}
}
\usepackage[nameinlink,noabbrev]{cleveref}

\numberwithin{equation}{section}

\newtheorem{theorem}{Theorem}[section]
\newtheorem{proposition}[theorem]{Proposition}
\newtheorem{lemma}[theorem]{Lemma}
\newtheorem{corollary}[theorem]{Corollary}

\crefname{proposition}{proposition}{propositions}
\Crefname{proposition}{Proposition}{Propositions}

\newcommand{\C}{\mathbb C}

\newcommand{\Mn}{M_n(\C)}

\newcommand{\norm}[1]{\left\lVert #1\right\rVert}
\newcommand{\abs}[1]{\left\lvert #1\right\rvert}
\newcommand{\ip}[2]{\left\langle #1,#2\right\rangle}
\newcommand{\spn}{\operatorname{span}}
\newcommand{\Int}{\operatorname{int}}

\title[Sharp spectral constants for scaled $q$-numerical ranges]
{Sharp spectral constants for scaled $q$-numerical ranges}

\author{Mohamed Amine Aouichaoui}
\address{Department of Mathematics, Faculty of Sciences of Monastir, University of Monastir, 5019 Monastir, Tunisia}
\email{aouichaoui.mohamedamine@fsm.u-monastir.tn}
\email{amine.aouichaoui@fsm.rnu.tn}

\author{Ryan O'Loughlin}

\address{Department of Mathematics and Statistics, University of Reading, Reading, England}
\email{r.d.oloughlin@reading.ac.uk}

\subjclass[2020]{Primary 47A12, 47A25; Secondary 15A60}
\keywords{$q$-numerical range, numerical range, spectral set, Crouzeix conjecture, similarity, Kantorovich inequality}

\begin{document}

\begin{abstract}
For $ n \geq 2$, $A\in M_n(\mathbb C)$ and $0<|q|\leq 1$, let
$\Omega_q(A)=q^{-1}W_q(A)$ be the scaled $q$-numerical range. We prove that for every $\gamma \geq 1$, 
\[
 \Omega_{\eta(\gamma)}(A)
 =\bigcup_{\kappa(S)\leq\gamma}W(S^{-1}AS),
 \qquad
 \eta(\gamma)=\frac{2}{\gamma+\gamma^{-1}},
\]
where $\kappa(S)=\|S\|\,\|S^{-1}\|$. As a consequence, we prove the sharp inequality
\[
 \|p(A)\|\leq
 \max\!\left\{1,\frac{2|q|}{1+\sqrt{1-|q|^2}}\right\}
 \max_{z\in\Omega_q(A)}|p(z)|,
\]
for all polynomials $p$.
\end{abstract}

\maketitle

\section{Introduction}

Throughout, $n\geq2$. Let $A\in M_n(\mathbb C)$ and let $K\subset\mathbb C$ be compact with
$\sigma(A)\subset K$.  For $C\geq1$, we say that $K$ is a
$C$-spectral set for $A$ if
\[
 \|f(A)\|\leq C\max_{z\in K}|f(z)|
\]
for every rational function $f$ with poles off $K$. Equivalently, the same inequality holds for every function holomorphic on a neighbourhood of $K$. If $\mathbb C\setminus K$ is connected, Runge’s theorem shows that it is enough to consider polynomials. Any such $C$ is called a spectral constant for $A$ on $K$.  The least admissible value is called the optimal spectral constant.

For a matrix $A\in\Mn$, its numerical range is
\[
 W(A)=\{\ip{Ax}{x}:x\in\C^n,\ \norm{x}=1\}.
\]
Crouzeix conjectured that the numerical range $W(A)$ is a
$2$-spectral set for every square matrix $A$, i.e.
\begin{equation}\label{eq:Crouzeix-conjecture}
 \norm{p(A)}\leq 2\max_{z\in W(A)}\abs{p(z)}
\end{equation}
for every complex polynomial $p$; see \cite{Crouzeix2004}. Crouzeix's Conjecture was recently proved in \cite{Jin} (and also see \cite{LoristSchwenninger2026}). Prior to these developments, Crouzeix established a dimension-free spectral
constant in \cite{Crouzeix2007}, and Crouzeix and Palencia proved the numerical range was a $(1 + \sqrt{2})$-spectral set (see \cite{CrouzeixPalencia2017} and the short reformulation in
\cite{RansfordSchwenninger2018}). We refer to \cite{BickelEtAl2020} for a
survey of the conjecture and its extremal-function theory.

The $q$-numerical range originated in the study of constrained bilinear
forms \cite{MarcusAndresen1977}.  With the convention that the inner product
is linear in the first variable, it is defined, for $0<|q|\leq1$, by
\[
 W_q(A)=\{\ip{Ax}{y}:\norm{x}=\norm{y}=1,\ \ip{x}{y}=q\}.
\]
Its convexity was proved by Tsing \cite{Tsing1984}.  Li, Mehta, and Rodman
subsequently developed its basic geometry, norm properties, and dependence
on $q$ \cite{LiMehtaRodman1994}.  Since $W_q(A)$ itself need not contain the
spectrum, the scaled set
\begin{equation}\label{eq:scaled-range}
 \Omega_q(A)=\frac1qW_q(A)
\end{equation}
was introduced in \cite{OLoughlinRani2026}, where spectral-set estimates
were obtained and $\Omega_q(A)$ was conjectured to be a $\max\!\left\{1,\frac{2|q|}{1+\sqrt{1-|q|^2}}\right\}$-spectral set. Explicitly, \cite[Conjecture~4.2]{OLoughlinRani2026} conjectures that
\begin{equation}\label{eq:q-conjecture}
 \norm{p(A)}\leq
 \max\!\left\{1,\frac{2|q|}{1+\sqrt{1-|q|^2}}\right\}
 \max_{z\in\Omega_q(A)}\abs{p(z)}, \quad \text{ for all polynomials } p .
\end{equation}

The main result of this paper is Theorem \ref{thm:abstract-transfer}, which shows the following. If for every matrix $B$, the compact set $\mathcal K(B) \subseteq \mathbb{C}$ is a
$C$-spectral set for $B$ and if $D$ is a bounded convex domain containing
$\sigma(A)$ and, for some $\gamma>1$,
\[
 \mathcal K(S^{-1}AS)\Subset D
 \qquad\text{whenever}\qquad
 \kappa(S)<\gamma,
\]
then $\overline{D}$ is a $\max\left\{1,\frac{C}{\gamma}\right\}$-spectral set. Combining Corollary \ref{cor:similarity-interior} with Theorem \ref{thm:abstract-transfer} with $\mathcal K(B)=W(B)$ and
$D=\Int\Omega_q(A)$ to show that if $W(B)$ is a $C$-spectral set for
every  $B$, then $\Omega_q(A)$ is a
\[
 \max\!\left\{1,\frac{C|q|}{1+\sqrt{1-|q|^2}}\right\}\text{-spectral set}.
\]
Consequently, since numerical ranges are 2-spectral sets, taking $C=2$ establishes \eqref{eq:q-conjecture}.

A key geometric ingredient, which may be of independent interest, is the
following exact similarity formula, proved in
Section \ref{2}. For $\gamma\geq1$ 
\[
 \Omega_{\eta(\gamma)}(A)
 =\bigcup_{\kappa(S)\leq\gamma}W(S^{-1}AS),
 \qquad
 \eta(\gamma)=\frac{2}{\gamma+\gamma^{-1}}.
\]
During the final stages of preparation of this manuscript Jin  updated his preprint \cite{Jin} with a solution of Crouzeix's Conjecture to also include a proof of \eqref{eq:q-conjecture}. In comparison to Jin's results, our Theorem \ref{thm:similarity-amplification}, and Theorem \ref{thm:abstract-transfer} apply not only to scaled (q)-numerical ranges but they give a general mechanism for transferring spectral-set estimates through similarities.

% A recent preprint of Jin \cite{Jin} announces a proof of
% Crouzeix's Conjecture. The implication established here is
% independent of the validity of that announcement.  If the announced proof is
% confirmed (or if Crouzeix's conjecture is established by any other
% method) then Theorem \ref{thm:transfer}  gives the conjectured
% constant \eqref{eq:q-conjecture} for scaled $q$-numerical ranges.

\section{Scaled \texorpdfstring{$q$}{q}-numerical ranges and similarities}\label{2}

Throughout, $n\geq2$.  Vector norms are Euclidean and matrix norms are the
induced operator norms.  For a compact set $K\subset\C$, write
\[
 \norm{f}_K=\max_{z\in K}\abs{f(z)}.
\]
For an invertible matrix $S$, the condition number of $S$ is
\[
 \kappa(S)=\norm{S}\,\norm{S^{-1}}.
\]
We first record the elementary reduction from a complex parameter to its
modulus and the nesting property of the scaled ranges.  The non-strict
inclusion in part (2) below also appears in
\cite{OLoughlinRani2026}; for completeness, we include its short proof.

\begin{proposition}\label{prop:nesting}
Let $A\in\Mn$.
\begin{enumerate}
\item If $0<|q|\leq1$, then $\Omega_q(A)=\Omega_{|q|}(A)$.
\item If $0<r<t\leq1$, then
\[
 \Omega_t(A)\subseteq\Omega_r(A).
\]
If $A$ is not scalar, then the inclusion is strict in the stronger form
\[
 \Omega_t(A)\subseteq\Int\Omega_r(A).
\]
\item If $0<r<1$, then
\[
 \sigma(A)\subseteq\Int\Omega_r(A)
\]
if and only if $A$ is not scalar.
\end{enumerate}
\end{proposition}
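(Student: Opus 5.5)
The plan is to work from an explicit parametrization of $\Omega_q(A)$ as a union of discs. Fix a unit vector $x$. Any unit $y$ with $\ip{x}{y}=q$ can be written as $y=\bar q\,x+\sqrt{1-|q|^2}\,z$ with $z\perp x$ a unit vector (any $z$ when $|q|=1$). Then
\[
 \ip{Ax}{y}=q\ip{Ax}{x}+\sqrt{1-|q|^2}\,\ip{Ax}{z},
\]
so
\[
 \Omega_q(A)=\Bigl\{\ip{Ax}{x}+\tfrac{\sqrt{1-|q|^2}}{q}\ip{Ax}{z}:\ \norm{x}=\norm{z}=1,\ z\perp x\Bigr\}.
\]

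\textbf{Part (1).} Replacing $z$ by $e^{i\phi}z$ keeps $z$ a unit vector orthogonal to $x$, and multiplies $\ip{Ax}{z}$ by an arbitrary unimodular factor. This absorbs the phase of $q$, so $\Omega_q(A)=\Omega_{|q|}(A)$.

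\textbf{Part (2), non-strict inclusion.} Set $c(r)=\sqrt{1-r^2}/r$, which is strictly decreasing on $(0,1]$, and $d_x=\norm{(I-xx^*)Ax}$. As $z$ ranges over unit vectors in $x^\perp$, the values $\ip{Ax}{z}$ fill the circle of radius $d_x$, and the closed disc when $n\ge3$. By Tsing's convexity theorem $\Omega_r(A)$ is convex, so in every case
\[
 \Omega_r(A)=\bigcup_{\norm{x}=1}\overline{D}\bigl(\ip{Ax}{x},\,c(r)d_x\bigr).
\]
Since $c(t)<c(r)$ for $t>r$, each disc for $t$ lies in the corresponding disc for $r$, which gives $\Omega_t(A)\subseteq\Omega_r(A)$. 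Moreover, whenever $d_x>0$ the disc for $t$ lies in the \emph{open} disc for $r$, hence in $\Int\Omega_r(A)$.

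\textbf{Part (2), strict inclusion: the main obstacle.} The remaining points of $\Omega_t(A)$ are the centres $\lambda=\ip{Ax}{x}$ with $d_x=0$. These are exactly the eigenvalues $\lambda$ with unit eigenvector $x$, and for them the disc degenerates to a point. We must show $\lambda\in\Int\Omega_r(A)$ when $A$ is not scalar.

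First, reduce to a $2\times2$ compression. Since $A$ is not scalar, there is a unit $v\perp x$ with $(A-\lambda)v\neq0$; otherwise $A=\lambda I$ on $x^\perp$ and $A^*$-considerations force $A$ to be scalar or yield a $v$ with $\ip{Av}{x}\ne0$. Let $P$ be the projection onto $\spn\{x,v\}$. Then $W_q(PAP|_{\mathrm{ran}P})\subseteq W_q(A)$, so it suffices to treat $B=\begin{pmatrix}\lambda&b\\0&\mu\end{pmatrix}$ with $(b,\mu)\ne(0,\lambda)$.

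Next, take $x_\theta=\cos\theta\,e_1+\sin\theta\, e^{i\psi}e_2$ and note that $(B-\lambda)x_\theta=\sin\theta\, e^{i\psi}(B-\lambda)e_2$. The centres $c_\theta$ and radii $c(r)d_{x_\theta}$ are both of order $\sin\theta$. For small $\theta$ their ratio stays bounded below, and varying $\psi$ moves the centres around $\lambda$ in different directions. Convexity then puts a small disc around $\lambda$ inside the convex hull of these discs.

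If this direct estimate becomes messy, the fallback is to invoke the known elliptical description of $W_q$ for $2\times2$ matrices and check directly that $\lambda$ lies in the interior of the ellipse $\frac1r W_r(B)$ for $r<1$. This is the step I expect to require the most care.

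\textbf{Part (3).} If $A=\lambda I$, then $\Omega_r(A)=\{\lambda\}$, which has empty interior, so the inclusion fails. If $A$ is not scalar, every $\lambda\in\sigma(A)$ is an eigenvalue with unit eigenvector $x$, so $\lambda=\ip{Ax}{x}$ with $d_x=0$. The interior argument of part (2) then shows $\lambda\in\Int\Omega_r(A)$.
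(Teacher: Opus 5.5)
Your part (1) and the non-strict half of (2) are essentially the paper's argument. The paper also absorbs the phase of $q$ by a rotation (of $y$ rather than of $z$). It also uses ``a circle of values plus Tsing's convexity gives the closed disc'', phrased in the unscaled $W_t$ with a rescaling by $r/t$ instead of your disc-union formula for $\Omega_r(A)$. For the strict statements the paper gives no argument and simply cites Li--Mehta--Rodman (Theorems~2.5 and~2.7). Your route there is different and more self-contained. Your observation is a good one: the discs with $d_x>0$ land in the open $r$-discs, so the strict half of (2) reduces to the forward half of (3).

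However, two steps in that part fail as written. \emph{First}, choosing $v\perp x$ with $(A-\lambda)v\neq0$ does not make the compression non-scalar, because $(A-\lambda)v$ may be orthogonal to both $x$ and $v$. For $A=e_3e_2^*\in M_3(\C)$, $\lambda=0$, $x=e_1$, $v=e_2$ you get $Av=e_3\neq0$, yet the compression to $\spn\{e_1,e_2\}$ is $0$. What you need is $v\perp x$ with $\ip{(A-\lambda)v}{x}\neq0$ or $\ip{(A-\lambda)v}{v}\neq0$. Such a $v$ exists. Otherwise the compression of $A-\lambda$ to $x^\perp$ has numerical range $\{0\}$, hence is $0$, and $(A-\lambda)v\perp x$ for every $v\perp x$. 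Then $A-\lambda$ kills $x^\perp$ as well as $x$, so $A$ is scalar; no $A^*$-considerations are needed.

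\emph{Second}, the asymptotics in your $2\times2$ step are wrong in exactly the degenerate cases. With $\delta=\mu-\lambda$ one computes
\[
 c_\theta-\lambda=b\sin\theta\cos\theta\,e^{i\psi}+\delta\sin^2\theta,\qquad
 d_{x_\theta}^2=\sin^2\theta\bigl(|b|^2+|\delta|^2-|b\cos\theta+\delta\sin\theta\,e^{-i\psi}|^2\bigr).
\]
If $\delta=0$, the radii are $c(r)|b|\sin^2\theta$, so their ratio to the centre displacement tends to $0$. If $b=0$, the centres $\lambda+\delta\sin^2\theta$ do not depend on $\psi$ and move only towards $\mu$. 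So you must split cases, and the two mechanisms turn out to be alternatives rather than jointly needed.
\begin{itemize}
\item If $b\neq0$: take $\tan\theta<|b|/|\delta|$ (any $\theta\in(0,\pi/2)$ if $\delta=0$). As $\psi$ varies, the centres alone trace a circle about $\lambda+\delta\sin^2\theta$ of radius $|b|\sin\theta\cos\theta$ that encloses $\lambda$. Convexity then gives $\lambda\in\Int W(B)\subseteq\Int\Omega_r(B)$.
\item If $b=0$: then $d_{x_\theta}=|\delta|\sin\theta\cos\theta$, and $\lambda$ lies in the open disc of radius $c(r)d_{x_\theta}$ about $c_\theta$ once $\tan\theta<c(r)$. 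This is where $r<1$ is genuinely needed, since here $W(B)=[\lambda,\mu]$ has $\lambda$ on its boundary.
\end{itemize}
With these two repairs your proof is complete. Your elliptical-range fallback would also work, but it is just another citation, comparable to the paper's appeal to Li--Mehta--Rodman.
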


\begin{proof}
\begin{enumerate}
\item
Write $q=re^{i\theta}$, where $r=|q|$.  If $\ip{x}{y}=r$, then
\[
 \ip{x}{e^{-i\theta}y}=e^{i\theta}r=q,
 \qquad
 \ip{Ax}{e^{-i\theta}y}=e^{i\theta}\ip{Ax}{y}.
\]
It follows that $W_q(A)=e^{i\theta}W_r(A)$ and so dividing by
$q=re^{i\theta}$ proves (1).

\item
Let
$w=\ip{Ax}{y}\in W_t(A)$, with $\norm{x}=\norm{y}=1$ and $\ip{x}{y}=t$.
Write
\[
 y=tx+\sqrt{1-t^2}\,u,
 \qquad u\perp x,\quad \norm{u}=1.
\]
For $\zeta\in\mathbb T$, the vector
$y_\zeta=rx+\zeta\sqrt{1-r^2}\,u$ is a unit vector satisfying
$\ip{x}{y_\zeta}=r$, and
\[
 \ip{Ax}{y_\zeta}
 =r\ip{Ax}{x}
  +\overline\zeta\sqrt{1-r^2}\,\ip{Ax}{u}.
\]
As $\zeta$ runs through $\mathbb T$, these points trace the circle with
centre $r\ip{Ax}{x}$ and radius
$\sqrt{1-r^2}\abs{\ip{Ax}{u}}$.  The convexity of $W_r(A)$ therefore puts
the closed disk bounded by this circle in $W_r(A)$.  Now
\[
 \frac rt w
 =r\ip{Ax}{x}
  +\frac rt\sqrt{1-t^2}\,\ip{Ax}{u},
\]
and
\[
 \frac r t\sqrt{1-t^2}\leq\sqrt{1-r^2},
\]
because the squared inequality is equivalent to $r^2\leq t^2$.
Thus $(r/t)w$ belongs to the disk above.  Equivalently,
$w/t\in\Omega_r(A)$, proving the non-strict inclusion.

The strict interior inclusion for non-scalar matrices is
\cite[Theorem~2.5]{LiMehtaRodman1994}.

\item
\cite[Theorem~2.7]{LiMehtaRodman1994} states that
$\operatorname{conv}(r\sigma(A))\subseteq\Int W_r(A)$ when $A$ is
non-scalar and $r<1$; division by $r$ proves the forward implication.
Conversely, if $A=\lambda I$, then
$\Omega_r(A)=\{\lambda\}$ has empty interior, so it cannot contain
$\sigma(A)=\{\lambda\}$ in its interior.
\end{enumerate}
\end{proof}

For $\gamma\geq1$, define
\begin{equation}\label{eq:eta}
 \eta(\gamma)=\frac{2}{\gamma+\gamma^{-1}}
             =\frac{2\gamma}{\gamma^2+1}.
\end{equation}
The function $\eta$ is decreasing on $[1,\infty)$.  The next result is the geometric core of the paper. 
% Notice that
% $\eta(\gamma)^{-1}=(\gamma+\gamma^{-1})/2$ is also the function which sends a complete spectral-set constant to a corresponding complete numerical-radius
% constant \cite{DavidsonPaulsenWoerdeman2017}.

\begin{theorem}\label{thm:similarity-envelope}
Let $A\in\Mn$ and $\gamma\geq1$.  Then
\begin{equation}\label{eq:similarity-envelope}
 \Omega_{\eta(\gamma)}(A)
 =
 \bigcup_{\substack{S\in\Mn\ {\rm invertible}\\ \kappa(S)\leq\gamma}}
 W(S^{-1}AS).
\end{equation}
% In particular, every invertible $S\in\Mn$ satisfies
% \begin{equation}\label{eq:pointwise-similarity}
%  W(S^{-1}AS)\subseteq\Omega_{\eta(\kappa(S))}(A).
% \end{equation}
\end{theorem}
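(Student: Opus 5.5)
The plan is to prove the two inclusions separately. Both rest on one identity. For invertible $S$ and $x\neq0$, put $u=Sx$ and $v=S^{-*}x$. Then
\[
 \ip{S^{-1}ASx}{x}=\ip{ASx}{S^{-*}x}=\ip{Au}{v},\qquad \ip{u}{v}=\ip{x}{x}=\norm{x}^2 .
\]
Normalizing $u'=u/\norm{u}$ and $v'=v/\norm{v}$, the Rayleigh quotient $\ip{S^{-1}ASx}{x}/\norm{x}^2$ equals $\ip{Au'}{v'}/t$, where
\[
 t=\ip{u'}{v'}=\frac{\norm{x}^2}{\norm{Sx}\,\norm{S^{-*}x}}\in(0,1].
\]
So every point of $W(S^{-1}AS)$ lies in $\Omega_t(A)$ for some real $t$. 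Conversely, a point of $\Omega_t(A)$ comes from $W(S^{-1}AS)$ exactly when we can find $S$ and $x$ with $Sx\parallel u$ and $S^{-*}x\parallel v$.

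\emph{Inclusion ``$\supseteq$''.} Fix $S$ with $\kappa(S)\le\gamma$ and a unit vector $x$. We have $\norm{Sx}^2=\ip{Px}{x}$ and $\norm{S^{-*}x}^2=\ip{P^{-1}x}{x}$, where $P=S^*S>0$ has condition number $\kappa(S)^2$. The Kantorovich inequality gives
\[
 \ip{Px}{x}\ip{P^{-1}x}{x}\le\frac{(M+m)^2}{4Mm}=\Bigl(\frac{\kappa(S)+\kappa(S)^{-1}}{2}\Bigr)^2 .
\]
Hence $t\ge\eta(\kappa(S))\ge\eta(\gamma)$, using that $\eta$ is decreasing. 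By the identity, the point lies in $\Omega_t(A)$. By Proposition~\ref{prop:nesting}(2) (trivial if $t=\eta(\gamma)$), $\Omega_t(A)\subseteq\Omega_{\eta(\gamma)}(A)$.

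\emph{Inclusion ``$\subseteq$''.} If $\gamma=1$, then $\eta=1$ and $\Omega_1(A)=W(A)$, so take $S=I$. Otherwise let $w=\ip{Au}{v}/\eta$ with unit vectors $u,v$ and $\ip{u}{v}=\eta:=\eta(\gamma)$; by Proposition~\ref{prop:nesting}(1) we may assume the parameter is real and positive. First build a model pair. Since $n\ge2$, choose orthonormal $f_1,f_2$, let $S_0$ act as $\gamma$ on $f_1$ and as $1$ on $\{f_1\}^\perp$, so $\kappa(S_0)=\gamma$, and let $x=(f_1+f_2)/\sqrt2$. Then $S_0x\propto\gamma f_1+f_2$ and $S_0^{-*}x=S_0^{-1}x\propto\gamma^{-1}f_1+f_2$. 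After normalizing these to $u_0,v_0$, their inner product is
\[
 \frac{2}{\sqrt{\gamma^2+1}\,\sqrt{\gamma^{-2}+1}}=\frac{2\gamma}{\gamma^2+1}=\eta,
\]
so this is equality in Kantorovich. The pairs $(u_0,v_0)$ and $(u,v)$ have the same Gram matrix. Hence there is a unitary $U$ with $Uu_0=u$ and $Uv_0=v$: map $\spn\{u_0,v_0\}$ isometrically onto $\spn\{u,v\}$ and extend by any isometry between the orthogonal complements, which have equal dimension.

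Set $S=US_0$. Then $\kappa(S)=\gamma$, $Sx\propto u$, and $S^{-*}=US_0^{-*}$ gives $S^{-*}x\propto v$, both with positive factors. By the identity, $\ip{S^{-1}ASx}{x}=\ip{Au}{v}/\eta=w$, so $w\in W(S^{-1}AS)$.

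The main obstacle is the reverse inclusion, specifically realizing a prescribed pair $(u,v)$ exactly with condition number at most $\gamma$. My first attempt, a diagonal $S$ in the basis $(u\pm v)$, runs into a sign obstruction. The model-pair-plus-unitary argument avoids this, and it works precisely because the Kantorovich bound is attained at the vector $x$ chosen above.
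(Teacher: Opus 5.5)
Your proof is correct. The inclusion $\bigcup_{\kappa(S)\le\gamma}W(S^{-1}AS)\subseteq\Omega_{\eta(\gamma)}(A)$ is the paper's argument: the identity $\langle S^{-1}ASx,x\rangle=\langle ASx,S^{-*}x\rangle$, then Kantorovich, then nesting.

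For the reverse inclusion you take a genuinely different route.

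\emph{The paper} works directly with the prescribed pair. In the orthonormal basis $\{y,v\}$, with $v=(x-ry)/\sqrt{1-r^2}$, it writes down a positive block $H$ with $Hy=x$, determinant $1$ and trace $2/r$. Its eigenvalues are therefore $\gamma^{\pm1}$. It then takes $S=H^{1/2}$ and the unit vector $Sy/\sqrt r$.

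\emph{You} instead build a diagonal model $S_0$ at which the Kantorovich bound is attained. You then move the model pair $(u_0,v_0)$ onto the prescribed pair $(u,v)$ by a unitary, using that two pairs of unit vectors with the same Gram matrix are unitarily equivalent. Then $S=US_0$ has $\kappa(S)=\gamma$, and $S^{-*}=US_0^{-*}$ gives the rest.

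What each buys:
\begin{itemize}
\item Your version needs no eigenvalue computation. It also makes clear that the reverse inclusion is exactly the sharpness of the Kantorovich inequality, which is why $\eta(\gamma)$ is the exact parameter.
\item The paper's version produces an explicit positive $S$ in one step. This is not a real gain, though. By polar decomposition, any $S$ can be replaced by $(SS^*)^{1/2}$ without changing $W(S^{-1}AS)$ or $\kappa(S)$.
\end{itemize}

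A minor point: your appeal to Proposition~\ref{prop:nesting}(1) is unnecessary, since $\eta(\gamma)$ is already a positive real number.
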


\begin{proof}
We first prove the inclusion from right to left.  Fix an invertible $S$
with $\kappa(S)\leq\gamma$ and let $u$ be a unit
vector. For ease of notation, set
\[
 a=\norm{Su},\qquad b=\norm{S^{-*}u},\qquad
 x=\frac{Su}{a},\qquad y=\frac{S^{-*}u}{b}.
\]
Then $x$ and $y$ are unit vectors, and
\begin{equation}\label{eq:t-def}
 t:=\ip{x}{y}=\frac1{ab}>0.
\end{equation}
Moreover,
\begin{equation}\label{eq:similarity-point}
 \ip{S^{-1}ASu}{u}=ab\,\ip{Ax}{y}=\frac1t\ip{Ax}{y}\in\Omega_t(A).
\end{equation}

Let $m$ and $M$ be the smallest and largest singular values of $S$,
respectively, and put $P=S^*S$.  Since
$\norm{S}=M$ and $\norm{S^{-1}}=m^{-1}$,
\[
 \kappa(S)=\frac Mm,
\]
and
\[
 a^2=\ip{Pu}{u},\qquad b^2=\ip{P^{-1}u}{u}.
\]
The Kantorovich inequality \cite{GreubRheinboldt1959} gives
\begin{equation}\label{eq:Kantorovich}
 a^2b^2
 =\ip{Pu}{u}\ip{P^{-1}u}{u}
 \leq\frac{(M^2+m^2)^2}{4M^2m^2}.
\end{equation}
For completeness, its verification here is immediate.  If
$\lambda\in[m^2,M^2]$, then
\[
 \lambda+\frac{m^2M^2}{\lambda}\leq m^2+M^2.
\]
Indeed, this is equivalent to
$(\lambda-m^2)(\lambda-M^2)\leq0$.  Applying the spectral theorem at $u$
gives
\[
 \ip{Pu}{u}+m^2M^2\ip{P^{-1}u}{u}\leq m^2+M^2.
\]
The arithmetic--geometric mean inequality now proves
\eqref{eq:Kantorovich}, and hence
\[
 ab\leq\frac12\left(\kappa (S)+\kappa (S)^{-1}\right),
 \qquad
 t\geq\eta(\kappa (S))\geq\eta(\gamma).
\]
Proposition \ref{prop:nesting}(2) and \eqref{eq:similarity-point} now show that
$\ip{S^{-1}ASu}{u}\in\Omega_{\eta(\gamma)}(A)$.  Since $S$ and $u$ were
arbitrary, the required inclusion follows. 

For the reverse inclusion, set $r=\eta(\gamma)$ and take
$z\in\Omega_r(A)$.  There are unit vectors $x,y$ with
\[
 \ip{x}{y}=r,\qquad z=\frac1r\ip{Ax}{y}.
\]
If $r=1$, then $\gamma=1$, $x=y$, and the choice $S=I$ suffices.  Suppose
$r<1$, put $s=\sqrt{1-r^2}$, and let $v=(x-ry)/s$.  Then
$\{y,v\}$ is orthonormal.  Define a positive matrix $H$ to be the identity
on $\spn\{y,v\}^{\perp}$ and, relative to the ordered basis $\{y,v\}$, to
have matrix
\[
 \begin{pmatrix}
  r&s\\[2pt]
  s&(2-r^2)/r
 \end{pmatrix}.
\]
Thus $Hy=x$.  The displayed matrix has determinant $1$ and trace
$2/r=\gamma+\gamma^{-1}$, so its eigenvalues are
$\gamma$ and $\gamma^{-1}$.  For $S=H^{1/2}$ we therefore have
$\kappa(S)=\gamma$.  Moreover,
\[
 \norm{Sy}^2=\ip{Hy}{y}=\ip{x}{y}=r.
\]
The vector $u=Sy/\sqrt r$ is unit, while
$Su=x/\sqrt r$ and $S^{-1}u=y/\sqrt r$.  Since $S$ is self-adjoint,
\[
 \ip{S^{-1}ASu}{u}
 =\frac1r\ip{S^{-1}Ax}{Sy}
 =\frac1r\ip{Ax}{y}
 =z.
\]
Hence $z\in W(S^{-1}AS)$ for some $S$ with
$\kappa(S)=\gamma$.
\end{proof}

For $0<r\leq1$, define
\begin{equation}\label{eq:chi}
 \chi(r)=\frac{1+\sqrt{1-r^2}}{r}.
\end{equation}
This is the larger positive solution of $\eta(\gamma)=r$; equivalently,
\begin{equation}\label{eq:chi-identities}
 \chi(r)+\chi(r)^{-1}=\frac2r,
 \qquad
 \frac1{\chi(r)}=\frac{r}{1+\sqrt{1-r^2}}.
\end{equation}

\begin{corollary}\label{cor:similarity-interior}
Let $0<r\leq1$.
\begin{enumerate}
\item
\[
 \Omega_r(A)
 =
 \bigcup_{\substack{S\in\Mn\ {\rm invertible}\\
                    \kappa(S)\leq\chi(r)}}
 W(S^{-1}AS).
\]
\item If $A$ is not scalar, $0<r<1$, and $S$ is invertible with
$\kappa(S)<\chi(r)$, then
\[
 W(S^{-1}AS)\Subset\Int\Omega_r(A).
\]
\end{enumerate}
Here $K\Subset U$ means that the compact set $K$ is contained in the open
set $U$.
\end{corollary}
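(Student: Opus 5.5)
Part (1) follows from Theorem~\ref{thm:similarity-envelope} with $\gamma=\chi(r)$. Since $\chi(r)\geq1$ and $\chi(r)+\chi(r)^{-1}=2/r$ by \eqref{eq:chi-identities}, we get $\eta(\chi(r))=r$. Substituting this into \eqref{eq:similarity-envelope} gives the union formula. For $r=1$ we have $\chi(1)=1$, so the union runs over multiples of unitaries, and both sides equal $W(A)$; this is consistent with the general case.

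For part (2), fix a non-scalar $A$, $0<r<1$, and an invertible $S$ with $\kappa(S)<\chi(r)$. Set $\gamma_0=\max\{1,\kappa(S)\}=\kappa(S)$, which is at least $1$ automatically.

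The key point is that $\eta$ is strictly decreasing on $[1,\infty)$ and $\chi(r)$ is the solution of $\eta(\gamma)=r$ lying in $[1,\infty)$. Hence $\gamma_0<\chi(r)$ gives $t:=\eta(\gamma_0)>\eta(\chi(r))=r$.

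We also need $t\leq1$, which holds since $\eta\leq1$ on $[1,\infty)$. Then:
\begin{itemize}
\item By Theorem~\ref{thm:similarity-envelope} with $\gamma=\gamma_0$, the set $W(S^{-1}AS)$ lies in $\Omega_t(A)$.
\item Since $r<t\leq 1$ and $A$ is not scalar, Proposition~\ref{prop:nesting}(2) gives $\Omega_t(A)\subseteq\Int\Omega_r(A)$.
\end{itemize}
Finally, $W(S^{-1}AS)$ is compact, being the continuous image of the unit sphere. So $W(S^{-1}AS)\Subset\Int\Omega_r(A)$.

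The only step needing care is checking that $\chi(r)$ is indeed the branch of $\eta^{-1}(r)$ in $[1,\infty)$. The other root of $\gamma+\gamma^{-1}=2/r$ is $\chi(r)^{-1}\leq1$, and $\chi(r)\geq 1$ because $1+\sqrt{1-r^2}\geq r$. This justifies the strict inequality $t>r$ from monotonicity. The remaining steps are direct citations.
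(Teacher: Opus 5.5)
Your proposal is correct and follows the paper's proof: part (1) is Theorem~\ref{thm:similarity-envelope} with $\gamma=\chi(r)$, and part (2) chains $W(S^{-1}AS)\subseteq\Omega_{\eta(\kappa(S))}(A)\subseteq\Int\Omega_r(A)$ using $\eta(\kappa(S))>r$ and the strict nesting in Proposition~\ref{prop:nesting}(2), then compactness of the numerical range. Your extra checks that $\chi(r)\geq1$, that $\eta$ is strictly decreasing, and that the case $r=1$ works are details the paper leaves implicit.
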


\begin{proof}
\begin{enumerate}
\item
Apply \Cref{thm:similarity-envelope} with
$\gamma=\chi(r)$, for which $\eta(\gamma)=r$.

\item
The strict inequality gives $\eta(\kappa(S))>r$.
By the previous theorem and the strict part of Proposition
\ref{prop:nesting}(2),
\[
 W(S^{-1}AS)
 \subseteq\Omega_{\eta(\kappa(S))}(A)
 \subseteq\Int\Omega_r(A).
\]
The numerical range is compact, which proves the compact containment.
\end{enumerate}
\end{proof}

\section{Extremal pairs and abstract transfer}\label{3}

Throughout this section, let $D\subset\C$ be a bounded convex domain and
let $A\in M_n(\C)$ satisfy $\sigma(A)\subset D$.  We denote by
$\mathcal M_D(A)$ the optimal spectral constant of $\overline D$ for $A$.

Denote
\[
 \mathcal A(\overline D)
 =
 \{f\in C(\overline D): f \text{ is holomorphic in }D\}.
\]
Since $\C\setminus\overline D$ is connected, Mergelyan's theorem
implies that the polynomials are uniformly dense in
$\mathcal A(\overline D)$. Consequently,
\begin{equation}\label{eq:HD-constant}
 \mathcal M_D(A)
 =
 \sup\bigl\{\norm{f(A)}:
 f\in\mathcal A(\overline D),\
 \norm{f}_{\overline D}\leq1\bigr\}.
\end{equation}

The following standard extremal-pair lemma combines Crouzeix's
extremal-function theorem \cite[Theorem~2.1]{Crouzeix2004} with the
orthogonality result in
\cite[Theorem~4.1 and Remark~4.2]{BickelEtAl2020}.

\begin{lemma}\label{lem:extremal-pair}
Let $D\subset\C$ be a bounded convex domain with $\sigma(A)\subset D$.
If $\mathcal M_D(A)>1$, then there exist a function $f$, holomorphic in
$D$ and continuous on $\overline D$, and unit vectors $x,y\in\C^n$ such
that
\[
 \norm{f}_{\overline D}=1,\qquad
 f(A)x=\mathcal M_D(A)y,\qquad
 \ip{x}{y}=0.
\]
\end{lemma}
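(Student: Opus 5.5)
We split the proof into two steps: first produce an extremal function together with a maximizing vector, and then show that this vector is orthogonal to its image.

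\textbf{Step 1: existence of an extremal function.} By \eqref{eq:HD-constant}, $\mathcal M_D(A)$ is the supremum of $\norm{f(A)}$ over the unit ball of $\mathcal A(\overline D)$. Take a maximizing sequence $f_k$. These functions are uniformly bounded by $1$ on $D$, so Montel's theorem gives a subsequence converging locally uniformly on $D$ to some holomorphic $f$ with $\abs{f}\leq1$ on $D$. Since $\sigma(A)\subset D$, the Riesz--Dunford integral over a contour in $D$ surrounding $\sigma(A)$ gives $f_k(A)\to f(A)$, and hence $\norm{f(A)}=\mathcal M_D(A)$. This limit is only in $H^\infty(D)$, so we need more to get continuity on $\overline D$. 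Here we invoke \cite[Theorem~2.1]{Crouzeix2004}. For a bounded convex domain it asserts that an extremal function exists of the form $f=B\circ\varphi$, where $\varphi:D\to\mathbb D$ is a conformal map and $B$ is a Blaschke product of degree at most $n-1$. Because $D$ is convex, its boundary is a Jordan curve, so Carathéodory's theorem extends $\varphi$ to a homeomorphism $\overline D\to\overline{\mathbb D}$. Consequently $f$ is continuous on $\overline D$ and $\abs{f}=1$ on $\partial D$, which gives $\norm{f}_{\overline D}=1$. This extremal function attains the supremum in \eqref{eq:HD-constant}.

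\textbf{Step 2: orthogonality.} Let $x$ be a unit right singular vector of $f(A)$ for its top singular value $\mathcal M_D(A)$, and set $y=f(A)x/\mathcal M_D(A)$. Then $f(A)x=\mathcal M_D(A)y$ with $y$ a unit vector, and it remains to show $\ip{x}{y}=0$, that is, $\ip{f(A)x}{x}=0$. This is exactly the content of \cite[Theorem~4.1 and Remark~4.2]{BickelEtAl2020}, and the hypothesis $\mathcal M_D(A)>1$ is precisely what that result needs.

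If we had to verify the orthogonality directly, we would use a variational argument. Compose $f$ with the disc automorphism $\phi_a(w)=(w-a)/(1-\bar a w)$, which leaves the constraint $\norm{\phi_a\circ f}_{\overline D}\le 1$ intact. Extremality then gives $\norm{\phi_a(f(A))x}\le\mathcal M_D(A)$ for small $a$. Expanding to first order in $a$, with $\phi_a(T)=(T-a)(I-\bar aT)^{-1}$, we get
\[
\phi_a(T)x=Tx-ax+\bar aT^2x+O(\abs a^2),\qquad T=f(A).
\]
Taking the squared norm, the first-order term is $2\operatorname{Re}\bigl(-\bar a\ip{Tx}{x}+a\ip{Tx}{T^2x}\bigr)$. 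Since $a$ ranges over all small complex numbers, this term can only be nonpositive if it vanishes identically, which forces $\ip{Tx}{x}=\ip{T^2x}{Tx}$. This identity alone does not give orthogonality. One has to combine it with the fact that the maximizing vector is also extremal for $T^*$, and the cited remark carries out this delicate step using $\mathcal M_D(A)>1$. This is the step I expect to be the main obstacle, so I would rely on the citation rather than reprove it.

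Combining the two steps gives the required triple $f$, $x$, $y$.
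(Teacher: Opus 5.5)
Your proposal is correct and is essentially the paper's argument, which likewise combines Crouzeix's extremal-function theorem (giving $f=B\circ\varphi\in\mathcal A(\overline D)$ with $\norm{f}_{\overline D}=1$) with the orthogonality result of Bickel et al.\ for a maximal right singular vector. Incidentally, the step you call delicate is immediate, so your variational sketch already proves the orthogonality without that citation: since $Tx=My$ and $T^*y=Mx$ with $M=\mathcal M_D(A)$, one has $\ip{T^2x}{Tx}=M^2\ip{y}{T^*y}=M^3\ip{y}{x}$ and $\ip{Tx}{x}=M\ip{y}{x}$, so your first-order identity gives $M(M^2-1)\ip{y}{x}=0$, hence $\ip{x}{y}=0$ because $M>1$.
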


\begin{theorem}
\label{thm:similarity-amplification}
Suppose that $\mathcal M_D(A)>1$.  For every $\gamma\geq1$ there is a
positive invertible $S$ with $\kappa(S)=\gamma$ such that
\begin{equation}\label{eq:sharp-amplification}
 \mathcal M_D(S^{-1}AS)=\gamma\mathcal M_D(A).
\end{equation}
Consequently,
\begin{equation}\label{eq:max-amplification}
 \max_{\kappa(R)\leq\gamma}\mathcal M_D(R^{-1}AR)
 =\gamma\mathcal M_D(A).
\end{equation}
\end{theorem}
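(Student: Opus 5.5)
The plan is to prove a trivial upper bound for all similarities and then obtain the matching lower bound by a diagonal similarity built from the extremal pair of Lemma~\ref{lem:extremal-pair}. Everything is well defined because $\sigma(R^{-1}AR)=\sigma(A)\subset D$ for every invertible $R$, so $\mathcal M_D(R^{-1}AR)$ makes sense.

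\emph{Upper bound.} For $f\in\mathcal A(\overline D)$ we have $f(R^{-1}AR)=R^{-1}f(A)R$, since this holds for polynomials and extends by Mergelyan density. Hence
\[
\norm{f(R^{-1}AR)}\leq\kappa(R)\norm{f(A)}\leq\kappa(R)\,\mathcal M_D(A)\norm{f}_{\overline D}.
\]
Taking the supremum in \eqref{eq:HD-constant} gives $\mathcal M_D(R^{-1}AR)\leq\kappa(R)\mathcal M_D(A)$. So the left side of \eqref{eq:max-amplification} is at most $\gamma\mathcal M_D(A)$. Once \eqref{eq:sharp-amplification} is established, equality in \eqref{eq:max-amplification} follows by taking $R=S$.

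\emph{Lower bound.} Since $\mathcal M_D(A)>1$, Lemma~\ref{lem:extremal-pair} supplies $f$ with $\norm{f}_{\overline D}=1$ and unit vectors $x\perp y$ with $f(A)x=\mathcal M_D(A)y$. This is where $n\geq2$ and the orthogonality are essential. Define the positive matrix
\[
S=\gamma^{1/2}P_x+\gamma^{-1/2}P_y+(I-P_x-P_y),
\]
where $P_x,P_y$ are the orthogonal projections onto $\spn\{x\}$ and $\spn\{y\}$. The eigenvalues of $S$ are $\gamma^{1/2}$, $\gamma^{-1/2}$ and possibly $1$, which lies between them since $\gamma\geq1$. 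Hence $\kappa(S)=\gamma$.

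Now compute
\[
S^{-1}f(A)Sx=\gamma^{1/2}S^{-1}f(A)x=\gamma^{1/2}\mathcal M_D(A)S^{-1}y=\gamma\,\mathcal M_D(A)\,y.
\]
Since $x$ and $y$ are unit vectors, this gives $\norm{f(S^{-1}AS)}\geq\gamma\mathcal M_D(A)$ with $\norm{f}_{\overline D}=1$. Therefore $\mathcal M_D(S^{-1}AS)\geq\gamma\mathcal M_D(A)$, and combined with the upper bound this yields \eqref{eq:sharp-amplification}.

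The only real obstacle is the existence of an extremal pair with $\ip{x}{y}=0$. This is exactly what Lemma~\ref{lem:extremal-pair} provides. Without orthogonality, a single similarity could not scale $x$ up and $y$ down independently, and the factor would be strictly less than $\gamma$.
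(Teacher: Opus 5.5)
Your proposal is correct and follows the paper's proof essentially step for step. It uses the same upper bound $\mathcal M_D(R^{-1}AR)\leq\kappa(R)\mathcal M_D(A)$ from similarity invariance of the functional calculus, the same positive $S$ acting as $\gamma^{1/2}$ on $x$, $\gamma^{-1/2}$ on $y$ and the identity on $\{x,y\}^{\perp}$, and the same computation $f(S^{-1}AS)x=\gamma\,\mathcal M_D(A)\,y$; your closing remark about non-orthogonal pairs is an unproved aside that the argument does not need.
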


\begin{proof}
Take the extremal data $f,x,y$ from
the previous lemma.  Define the positive matrix $S$ by
\[
 Sx=\sqrt\gamma\,x,\qquad
 Sy=\frac1{\sqrt\gamma}\,y,\qquad
 S|_{\{x,y\}^{\perp}}=I.
\]
Then $\kappa(S)=\gamma$.  If $B=S^{-1}AS$, similarity invariance of the
holomorphic functional calculus gives
\[
 \mathcal M_D(B)\leq\kappa(S)\mathcal M_D(A)=\gamma \mathcal M_D(A).
\]
For the extremal function $f$, however,
\[
 f(B)x
 =S^{-1}f(A)Sx
 =\sqrt\gamma\,\mathcal M_D(A) S^{-1}y
 =\gamma \mathcal M_D(A) y.
\]
Thus $\mathcal M_D(B)\geq\gamma \mathcal M_D(A)$, proving
\eqref{eq:sharp-amplification}.  The same upper bound applies to every
$R$ with $\kappa(R)\leq\gamma$, while this $S$ attains it; hence
\eqref{eq:max-amplification} follows.
\end{proof}

\begin{theorem}\label{thm:abstract-transfer}
Suppose that $\mathcal K(B)$ is a $C$-spectral set for every
$B\in M_n(\C)$. Let $A\in M_n(\C)$, and let $D\subset\C$ be a bounded
convex domain containing $\sigma(A)$. If $\gamma>1$ and
\begin{equation}\label{eq:transfer-hypothesis}
 \mathcal K(S^{-1}AS)\Subset D
 \qquad\text{whenever $S$ is invertible and }\kappa(S)<\gamma,
\end{equation}
then
\[
 \mathcal M_D(A)
 \leq
 \max\left\{1,\frac{C}{\gamma}\right\}.
\]
\end{theorem}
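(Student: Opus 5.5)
Assume $\mathcal M_D(A)>1$, since otherwise there is nothing to prove. The argument combines the amplification of Theorem~\ref{thm:similarity-amplification} with the hypothesis \eqref{eq:transfer-hypothesis}. The main idea is that a similarity with condition number just below $\gamma$ multiplies the spectral constant on $\overline D$ by that condition number. On the other hand, the similar matrix still has $\mathcal K$ inside $D$, which caps its spectral constant on $\overline D$ by $C$.

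Concretely, fix $1\le\gamma'<\gamma$. By Theorem~\ref{thm:similarity-amplification} there is a positive invertible $S$ with $\kappa(S)=\gamma'$ and $\mathcal M_D(B)=\gamma'\mathcal M_D(A)$, where $B=S^{-1}AS$. Since $\kappa(S)<\gamma$, the hypothesis gives $\mathcal K(B)\Subset D\subset\overline D$. Take any polynomial $p$. Because $\mathcal K(B)$ is a $C$-spectral set for $B$,
\[
\norm{p(B)}\le C\max_{z\in\mathcal K(B)}\abs{p(z)}\le C\norm{p}_{\overline D}.
\]
Polynomials are dense in $\mathcal A(\overline D)$ by Mergelyan, so \eqref{eq:HD-constant} yields $\mathcal M_D(B)\le C$. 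Hence $\gamma'\mathcal M_D(A)\le C$.

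Now let $\gamma'\uparrow\gamma$ to get $\mathcal M_D(A)\le C/\gamma$. Together with the trivial case this gives $\mathcal M_D(A)\le\max\{1,C/\gamma\}$. The strictness of $\kappa<\gamma$ in the hypothesis is why we need the limiting step: Theorem~\ref{thm:similarity-amplification} gives condition number exactly $\gamma'$, which we cannot take equal to $\gamma$.

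The one point needing care is the inequality $\norm{p(B)}\le C\norm{p}_{\mathcal K(B)}$ for polynomials $p$. The definition of $C$-spectral set requires $\sigma(B)\subset\mathcal K(B)$ and applies to rational functions with poles off $\mathcal K(B)$, and polynomials qualify, so this is immediate. I would also check that the functional calculus on $\overline D$ for $B$ is well defined. This holds because $\sigma(B)=\sigma(A)\subset D$. I expect no real obstacle beyond this bookkeeping; the substance is already in Theorem~\ref{thm:similarity-amplification}.
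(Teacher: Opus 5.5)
Your proof is correct and is essentially the paper's argument: amplify via Theorem~\ref{thm:similarity-amplification} with a condition number $\lambda<\gamma$, bound $\mathcal M_D(S_\lambda^{-1}AS_\lambda)$ by $C$ using the hypothesis, and let $\lambda\uparrow\gamma$. The only difference is cosmetic: you test against polynomials and use density, whereas the paper applies the $C$-spectral bound directly to $f\in\mathcal A(\overline D)$, which is holomorphic near $\mathcal K(S_\lambda^{-1}AS_\lambda)$ by the compact containment (your polynomial route in fact needs only $\mathcal K(B)\subseteq\overline D$).
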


\begin{proof}
If $\mathcal M_D(A)=1$, the conclusion is immediate. Suppose that
$\mathcal M_D(A)>1$, and fix $1<\lambda<\gamma$. By Theorem~3.2, there
is a positive invertible matrix $S_\lambda$ such that
\[
 \kappa(S_\lambda)=\lambda,
 \qquad
 \mathcal M_D(S_\lambda^{-1}AS_\lambda)
 =
 \lambda\mathcal M_D(A).
\]
Since $\lambda<\gamma$, the hypothesis gives
\[
 \mathcal K(S_\lambda^{-1}AS_\lambda)\Subset D.
\]
Consequently, every $f\in\mathcal A(\overline D)$ is holomorphic on a
neighbourhood of $\mathcal K(S_\lambda^{-1}AS_\lambda)$. Since this set
is a $C$-spectral set for $S_\lambda^{-1}AS_\lambda$, we have
\[
 \norm{f(S_\lambda^{-1}AS_\lambda)}
 \leq
 C\norm{f}_{\mathcal K(S_\lambda^{-1}AS_\lambda)}
 \leq
 C\norm{f}_{\overline D}.
\]
Taking the supremum over all
$f\in\mathcal A(\overline D)$ with
$\norm{f}_{\overline D}\leq1$ gives
\[
 \lambda\mathcal M_D(A)
 =
 \mathcal M_D(S_\lambda^{-1}AS_\lambda)
 \leq C.
\]
Letting $\lambda\uparrow\gamma$ yields
\[
 \mathcal M_D(A)\leq\frac{C}{\gamma}.
\]
\end{proof}

\section{Transfer to scaled \texorpdfstring{$q$}{q}-numerical ranges}\label{4}

% We now apply \Cref{thm:abstract-transfer}.  It is useful to formulate the
% result for an arbitrary admissible classical Crouzeix constant.

% \begin{theorem}\label{thm:q-transfer}
% Fix \(n\geq2\), and let \(C\geq1\) be such that \(W(B)\) is a
% \(C\)-spectral set for every \(B\in M_n(\C)\). Then, for every
% \(A\in M_n(\C)\) and every \(q\in\C\) with \(0<|q|\leq1\),
% the set \(\Omega_q(A)\) is a
% \[
%  \max\left\{
%   1,\frac{C|q|}{1+\sqrt{1-|q|^2}}
%  \right\}\text{-spectral set}
% \]
% for \(A\).
% \end{theorem}

% \begin{proof}
% Set \(r=|q|\). By Proposition~2.1(1),
% \(\Omega_q(A)=\Omega_r(A)\). If \(r=1\), then
% \(\Omega_r(A)=W(A)\), and the result follows directly from the
% hypothesis. If \(A=\lambda I\), then
% \(\Omega_r(A)=\{\lambda\}\), which is a \(1\)-spectral set for \(A\).
% We may therefore assume that \(0<r<1\) and that \(A\) is non-scalar.

% Set
% \[
%  D=\operatorname{int}\Omega_r(A).
% \]
% By Proposition~2.1(3), \(\sigma(A)\subset D\). Moreover,
% Corollary~2.3(2) gives
% \[
%  W(S^{-1}AS)\Subset D
%  \qquad\text{whenever}\qquad
%  \kappa(S)<\chi(r).
% \]
% Applying Theorem~3.3 with
% \[
%  \mathcal K(B)=W(B),
%  \qquad
%  \gamma=\chi(r),
% \]
% shows that \(\overline D\) is a
% \[
%  \max\left\{1,\frac{C}{\chi(r)}\right\}\text{-spectral set}
% \]
% for \(A\). Since
% \[
%  \overline D=\Omega_r(A)
%  \qquad\text{and}\qquad
%  \frac{1}{\chi(r)}
%  =\frac{r}{1+\sqrt{1-r^2}},
% \]
% the result follows.
% \end{proof}

For $0<|q|\leq1$, let $\mathfrak C_q$ denote the least $C\geq1$ such
that $\Omega_q(A)$ is a $C$-spectral set for every $n\geq2$ and every
$A\in M_n(\C)$.

The following theorem determines \(\mathfrak C_q\) exactly.

\begin{theorem}\label{thm:q-transfer}
Let $0<|q|\leq1$. Then
\[
 \mathfrak C_q
 =
 \max\left\{
  1,\frac{2|q|}{1+\sqrt{1-|q|^2}}
 \right\}.
\]
\end{theorem}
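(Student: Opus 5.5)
Two inequalities are needed: an upper bound valid for every $A$, and a family of matrices showing the constant cannot be lowered.

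\textbf{Upper bound.} Set $r=|q|$. By Proposition~\ref{prop:nesting}(1), $\Omega_q(A)=\Omega_r(A)$.

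We first dispose of the degenerate cases. If $r=1$, then $\Omega_1(A)=W(A)$, and Crouzeix's conjecture (now a theorem, \cite{Jin}) gives the constant $2=\max\{1,2\cdot1/(1+0)\}$. If $A=\lambda I$, then $\Omega_r(A)=\{\lambda\}$ and $p(A)=p(\lambda)I$, so the constant is $1$.

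Otherwise $0<r<1$ and $A$ is non-scalar. Put $D=\Int\Omega_r(A)$, which is a bounded convex domain. By Proposition~\ref{prop:nesting}(3), $\sigma(A)\subset D$. By Corollary~\ref{cor:similarity-interior}(2), $W(S^{-1}AS)\Subset D$ whenever $\kappa(S)<\chi(r)$. Since $\chi(r)>1$ for $r<1$, we may apply Theorem~\ref{thm:abstract-transfer} with $\mathcal K(B)=W(B)$, $C=2$ and $\gamma=\chi(r)$. This gives $\mathcal M_D(A)\le\max\{1,2/\chi(r)\}$, and by \eqref{eq:chi-identities} this equals the claimed value. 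Because $\Omega_r(A)$ is convex with nonempty interior, $\overline D=\Omega_r(A)$, so the bound holds on $\Omega_q(A)$. Hence $\mathfrak C_q\le\max\{1,2r/(1+\sqrt{1-r^2})\}$.

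\textbf{Lower bound.} The bound $\mathfrak C_q\ge1$ follows from $p\equiv1$. For the other term we must exhibit matrices whose optimal constant on $\Omega_r$ approaches $2/\chi(r)$.

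The plan is to start from a matrix $A_0$ whose numerical range is (nearly) extremal for Crouzeix, with constant near $2$. The natural choice is the $2\times2$ Jordan block $J=\begin{pmatrix}0&2\\0&0\end{pmatrix}$: here $W(J)$ is the closed unit disk and $p(z)=z$ gives $\|J\|=2$. We then compute $\Omega_r(J)$ directly. Since $W_r$ of a nilpotent $2\times 2$ block is a disk centred at $0$, one finds $\Omega_r(J)=\rho\,\overline{\mathbb D}$ for an explicit radius $\rho$. Taking $p(z)=z$ gives the ratio $\|J\|/\rho=2/\rho$, so the remaining task is to check that $\rho=\chi(r)$. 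A plausible guess is $\rho=(1+\sqrt{1-r^2})/r$.

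We verify this by maximising $|\ip{Jx}{y}|/r$ over unit $x,y$ with $\ip{x}{y}=r$. Writing $x=(\cos\alpha,\sin\alpha)$ and choosing $y$ optimally, we get $\ip{Jx}{y}=2\sin\alpha\,\bar y_1$. The constraints make this a one-variable optimisation, whose maximum should be $1+\sqrt{1-r^2}$, consistent with the Li--Mehta--Rodman formula for $W_r$ of a $2\times2$ matrix, namely an ellipse. Indeed, for $J$ that formula gives the radius $r\cdot\tfrac{0}{2}+\tfrac12\cdot 2\,(1+\sqrt{1-r^2})$, and dividing by $r$ yields $\chi(r)$. With this, $\|J\|=2=(2/\chi(r))\max_{\Omega_r(J)}|z|$, which proves the lower bound. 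Ambient dimensions $n>2$ are handled by the direct sum $J\oplus0$; it should be checked that this does not enlarge $\Omega_r$ beyond a set on which $|z|\le\chi(r)$.

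\textbf{Main obstacle.} The main obstacle is the lower bound, specifically the exact computation $\Omega_r(J)=\chi(r)\overline{\mathbb D}$, or some other extremal family attaining $2/\chi(r)$ in the limit. If the Jordan block turns out not to be sharp, the fallback is to take $A_0$ with Crouzeix constant near $2$ and apply Theorem~\ref{thm:similarity-amplification} together with Theorem~\ref{thm:similarity-envelope}. The aim then would be to show that the supremum of $\mathcal M$ over the similarity orbit, measured on $\Omega_r$, equals $\gamma\cdot(\text{constant on }W)$ with $\gamma=\chi(r)$.
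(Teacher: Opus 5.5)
Your proposal is correct and is essentially the paper's proof. The upper bound is the same application of Theorem~\ref{thm:abstract-transfer}, with $D=\Int\Omega_r(A)$, $\mathcal K(B)=W(B)$, $C=2$ and $\gamma=\chi(r)$, fed by Proposition~\ref{prop:nesting}(3) and Corollary~\ref{cor:similarity-interior}(2). The lower bound uses the same matrix $\begin{pmatrix}0&2\\0&0\end{pmatrix}$ with $p(z)=z$; the paper gets $\max_{\Omega_r}|z|=\chi(r)$ from the rank-one formula of \cite{DencicEtAl2025}, where you invoke the $2\times2$ elliptical formula. Your own one-variable optimisation closes this step directly: $\max_\alpha 2\sin\alpha\,(r\cos\alpha+\sqrt{1-r^2}\sin\alpha)=1+\sqrt{1-r^2}$. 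Your pending check of $J\oplus0$ for $n>2$ is unnecessary, because $\mathfrak C_q$ must work for every $n\ge2$, so a single $2\times2$ example already forces the lower bound.
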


\begin{proof}
For the upper bound, fix \(n\geq2\) and \(A\in M_n(\C)\). By
Proposition~2.1(1), and since the cases \(\abs{q}=1\) and
\(A=\lambda I\) are immediate, we may assume that
\(0<\abs{q}<1\) and \(A\neq\lambda I\).

By Proposition \ref{prop:nesting}(3) and Corollary \ref{cor:similarity-interior}(2),
\[
 \sigma(A)\subset\Omega_{\abs{q}}(A)^\circ
\]
and
\[
 W(S^{-1}AS)\Subset\Omega_{\abs{q}}(A)^\circ
 \qquad\text{whenever}\qquad
 \kappa(S)<\chi(\abs{q}).
\]
Since, as previously mentioned,  \(W(B)\) is a \(2\)-spectral set
for every square matrix \(B\) \cite{Jin}, Theorem \ref{thm:abstract-transfer}, applied to
\(\Omega_{\abs{q}}(A)^\circ\) with
\[
 \mathcal K(B)=W(B),
 \qquad
 C=2,
 \qquad
 \gamma=\chi(\abs{q}),
\]
(where $\chi$ is defined as in \eqref{eq:chi}) shows that
\[
 \Omega_q(A)=\Omega_{\abs{q}}(A)
 =\overline{\Omega_{\abs{q}}(A)^\circ}
\]
is a
\[
 \max\left\{
  1,\frac{2}{\chi(\abs{q})}
 \right\}
 =
 \max\left\{
  1,\frac{2\abs{q}}{1+\sqrt{1-\abs{q}^2}}
 \right\}\text{-spectral set}
\]
for \(A\). Hence
\[
 \mathfrak C_q
 \leq
 \max\left\{
  1,\frac{2\abs{q}}{1+\sqrt{1-\abs{q}^2}}
 \right\}.
\]

For the reverse inequality, take
\[
 N=\begin{pmatrix}0&2\\0&0\end{pmatrix}.
\]
Proposition \ref{prop:nesting} (1) and the rank-one formula
\cite[Theorem~2.1]{DencicEtAl2025} give
\[
 \max_{z\in\Omega_q(N)}|z|
 =
 \frac{1+\sqrt{1-\abs{q}^2}}{\abs{q}}.
\]
Taking \(p(z)=z\) and using \(\norm{N}=2\), we obtain
\[
 \mathfrak C_q
 \geq
 \frac{\norm{p(N)}}{\norm{p}_{\Omega_q(N)}}
 =
 \frac{2\abs{q}}{1+\sqrt{1-\abs{q}^2}}.
\]
Since every spectral constant is at least \(1\), the reverse inequality
follows, and hence
\[
 \mathfrak C_q
 =
 \max\left\{
  1,\frac{2\abs{q}}{1+\sqrt{1-\abs{q}^2}}
 \right\}.
\]
\end{proof}

\end{document}